\numberwithin{equation}{section}
\numberwithin{table}{section}
\numberwithin{figure}{section}
\newtheorem{assumption}{Assumption}%[section]
\newtheorem{theorem}{Theorem}%[section]
\newtheorem{lemma}{Lemma}%[section]
\newtheorem{proposition}{Proposition}%[section]
\newcommand{\R}{\mathbb{R}} % real numbers
\newcommand{\fmm}[1]{\bar{\mathcal{#1}}}
\newcommand{\fm}[1]{\bar{#1}}
\newcommand{\FM}{(\lambda,F,G)} %fluid model
\newcommand{\brackets}[1]{\left( {#1} \right)}
\title{
Convergence to Equilibrium States for Fluid Models \\
of Many-server Queues with Abandonment
}
\author{
Zhenghua Long \ and \ Jiheng Zhang\\
The Hong Kong University of Science and Technology\\
\{\href{mailto:zlong@ust.hk}{zlong},
\href{mailto:j.zhang@ust.hk}{j.zhang}\}@ust.hk
}
\date{\today}
\begin{document}

\maketitle

\begin{abstract}
Fluid models have become an important tool for the study of many-server queues with general service and patience time distributions.
The equilibrium state of a fluid model has been revealed by \cite{Whitt2006} and shown to yield reasonable approximations to the steady state of the original stochastic systems. 
However, it remains an open question whether the solution to a fluid model converges to the equilibrium state and under what condition. 
We show in this paper that the convergence holds under a mild condition. 
Our method builds on the framework of measure-valued processes developed in \cite{Zhang2013}, which keeps track of the remaining patience and service times.
\end{abstract}

\emph{Key words and phrases: many-server queue, abandonment, fluid model, equilibrium state, convergence}

% \tableofcontents
% \newpage

\section{Introduction}

In this paper, we analyze the asymptotic behavior of fluid models for many-server queues with abandonment. We allow both the service time and patience time distributions to be general. To the best of our knowledge, \citet{Whitt2006} is the earliest to propose a fluid model for many-server queues with generally distributed service and patience times. In \cite{Whitt2006}, the equilibrium state for a fluid model is characterized and extensive simulations show that the equilibrium state of the fluid model yields reasonably good approximations to the original stochastic system in steady state. 

The challenge in studying many-server queues, especially when the service time is generally distributed, is that the status of the server pool plays an important role in the dynamics. However, describing the status itself is quite complicated. There have been two streams of work providing different modeling approaches. 
\citet{KangRamanan2010}, which is based on \cite{KaspiRamanan2011} for many-server queues without abandonment, modeled the status of the server pool by keeping track of the ``age'' (the amount of time a customer has been in service). Alternatively, \citet{Zhang2013} modeled the status of the server pool by tracking each customer's ``residual'' (the remaining service time). The fluid model proposed in \cite{KangRamanan2010} is too complicated to be analyzed. Even the existence and uniqueness of the fluid model solution is proved  using heavy traffic approximation. This paper thus builds on the second approach instead.

Both \cite{KangRamanan2010} and \cite{Zhang2013} established the fluid model as the limit of fluid-scaled stochastic processes underlying many-server queues (in Skorohod $J_1$ topology as the size of the system goes to infinity). However, the analysis of the fluid model itself remains open.  \citet{Whitt2006}, \citet{KangRamanan2010} and \citet{Zhang2013} have all been unable to show that the fluid model converges to the equilibrium states. Such a convergence was proved by
\citet{Whitt2004} for a many-server fluid model with exponentially distributed service and patience times. Taking advantage of the exponential distribution, the fluid model reduces to a one-dimensional ordinary differential equation (ODE). 

In general, proving convergence to the equilibrium states for fluid models is intrinsically difficult, even though the fluid models are just deterministic dynamic systems.
\citet{Bramson1996a,Bramson1996b} used entropy functions to prove convergence to the equilibrium states for fluid models of single-server queueing networks. Their fluid models are characterized by a set of ODEs plus some policy constraints. The challenge in these works mainly comes from the routing in the general network structure. The idea of constructing the entropy functions is based on the Kullback-Leibler distance. \citet{ZDZ2009} studied the fluid model for the limited processor sharing queue. The fluid model is essentially proved to be equivalent to an equation involving convolution with the service time distribution. Convergence of this fluid model is established based on careful analysis of the convolution equation and application of the renewal theorem. Part of the proving technique builds on \cite{PSW2006}, which studied the fluid model for overloaded processor sharing queues. 

The current work can be viewed as a sequel to \cite{Zhang2013}. We use the same definition for the fluid model, and even the same set of notations for easy connection. The modeling is close to that in \cite{ZDZ2009} but the method is significantly different due to customer abandonment (which does not appear in \cite{ZDZ2009}) and intrinsic difficulties in many-server models. \citet{PuhalskiiReed2010} offered a nice treatment for the fluid model of the many-server queue without abandonment. Though the main focus of that paper is not the fluid analysis, the elegant treatment of the fluid model helps to relax the assumption on initial customers made in \cite{Reed2009}. Abandonment, especially with a general patience time distribution, imposes significant challenges. A virtual buffer, which holds all the customers who have arrived but not yet scheduled to receive service according to the FCFS policy, is constructed to study abandonment in \cite{Zhang2013}. The idea is to keep some abandoned customers in the virtual buffer for tracking purposes. This paper adoptes the same idea. Our fluid model can be shown to be equivalent to the one in \cite{PuhalskiiReed2010} when patience time becomes infinite (no abandonment). 

We hope the analytical tools we develop in this paper can pave the way for studying more complicated many-server models such as the multiple customer class V-model studied in \cite{AGS2010a}. The analysis of any multi-class model will surely be more difficult, since the policy needs to be carefully modeled. This is out of the scope of the current paper. Having a better understanding of the basic model is a necessary step for embarking on more complicated models. 

The rest of the paper is organized as follows. Section~\ref{sec:fluid-model} introduces the fluid model with assumptions on the service and patience time distributions. Section~\ref{sec:conv-invar-stat} defines the equilibrium state and presents the main convergence result. A preliminary analysis together with some intuitions are provided in Section~\ref{sec:some-prel-analys}. Section~\ref{sec:proof-convergence} proves the main theorem based on several auxiliary tools provided in Appendix~\ref{sec:aux-lemmas}. Section~\ref{sec:conclusion} concludes this paper.

% \newpage
\section{Fluid Models of Many-server Queues}
\label{sec:fluid-model}

Let $\mathbb{R}$ denote the set of real numbers and $\mathbb{R}_+=[0,\infty)$. For $a,b\in\mathbb{R}$, write $a^+$ for the positive part of $a$ and $a\wedge b$ for the minimum. Denote $C_x=(x,\infty)$ and $F^c(x) = 1-F(x)$ for any distribution function. At time $t$, let $\fmm{R}(t)(C_x)$ denote the amount of fluid in the virtual buffer with remaining patience time larger than $x$. Since the virtual buffer also holds abandoned customers who have negative remaining patience times, the testing parameter $x$ is allowed to be both positive and negative for the measure $\fmm{R}(t)$. 
Introduce $\fm{R}(t)=\fmm{R}(t)(\R)$, the total fluid content in the virtual buffer.  
Denote by $\lambda$  the arrival rate. 
So at time $t$, the earliest arrived fluid content in the virtual buffer arrives at time $t-\fm{R}(t)/\lambda$.
To find out the status of the virtual buffer at time $t$, we take integral from $t-\fm{R}(t)/\lambda$ to $t$. If an infinitesimal amount of fluid content $\lambda ds$ arrives at time $s$, only a fraction $F^c(x+t-s)$ of it has remaining patience time larger than $x$ at time $t$ since $t-s$ amount of time has been spent waiting in queue. This yields the equation \eqref{eq:fluid-def-R}.
Please see Section~2 of \cite{Zhang2013} for a more detailed discussion on the virtual buffer.
Let $\fmm{Z}(t)(C_x)$ denote the amount of fluid in the server pool with remaining service time larger than $x$ at time $t$. Unlike the virtual buffer, a customer leaves the system once his remaining service time hits 0. So we restrict the testing parameter $x\in\R_+$ for the measure $\fmm{Z}(t)$. Let
\begin{equation}\label{eq:B(t)}
  \fm{B}(t)=\lambda t-\fm{R}(t).
\end{equation}
The physical intuition for $\fm{B}$ is that $\fm{B}(t)-\fm{B}(s)$ represents the amount of fluid in the virtual buffer that could  have entered service during time interval $(s,t]$. It should be pointed out that not all of it will actually enter the server pool. At time $s$, an infinitesimal amount $d\fm{B}(s)$ is scheduled to enter service after waiting in the virtual buffer for $\frac{\fm{R}(s)}{\lambda}$. Thus, a fraction $F\brackets{\frac{\fm{R}(s)}{\lambda}}$ has actually abandoned queue by time $s$. Only the rest makes it to the service. This contributes to the term $F^c\brackets{\frac{\fm{R}(s)}{\lambda}}$ in \eqref{eq:fluid-def-Z}. 
The following \emph{fluid dynamic equations} characterize how the fluid content $(\fmm{R}(t), \fmm{Z}(t))$ evolves over time.  For all $t\ge 0$,
\begin{align}
  \label{eq:fluid-def-R}
  \fmm{R}(t)(C_x) &= \lambda\int_{t-\frac{\fm{R}(t)}{\lambda}}^t F^c(x+t-s)ds, 
  \ x\in\R, \\
  \label{eq:fluid-def-Z}
  \fmm{Z}(t)(C_x) &= \fmm{Z}(0)(C_{x+t})
  +\int_0^t F^c\brackets{\frac{\fm{R}(s)}{\lambda}}
  G^c(x+t-s) d \fm{B}(s),
  \ x\in\R_+,
\end{align}
where we assume the initial state $\fmm{Z}(0)(C_x)$ is non-increasing and will converge to 0 as $x\to \infty$.
Introduce $\fm{Z}(t)=\fmm{Z}(t)(C_0)$, the fluid content in service; and $\fm{Q}(t)=\fmm{R}(t)(C_0)$, the fluid queue length.
Let $\fm{Z}(t)+\fm{Q}(t)=\fm{X}(t)$ denote the total amount of fluid in the physical system. The following non-idling constraints must be valid at any time $t\geq 0$,
\begin{align}
  \label{eq:non-idling-q}
  \fm{Q}(t) &= (\fm{X}(t)-1)^+,\\
  \label{eq:non-idling-z}
  \fm{Z}(t) &= \fm{X}(t)\wedge 1.
\end{align}

Essentially, the fluid dynamic equations \eqref{eq:fluid-def-R}--\eqref{eq:fluid-def-Z} and non-idling constraints \eqref{eq:non-idling-q}--\eqref{eq:non-idling-z} describe the measure-valued process $(\fmm{R}, \fmm{Z})$, if we regard the real-valued processes $\fm{Q},\fm{R},\fm{Z},\fm{B}$ as notations derived from the measure-valued process.
Let $\FM$ denote the \emph{fluid model} defined by \eqref{eq:fluid-def-R}--\eqref{eq:non-idling-z}.
The initial condition $(\fmm{R}(0), \fmm{Z}(0))$ is said to be \emph{valid} if it satisfies the equations \eqref{eq:fluid-def-R}--\eqref{eq:non-idling-z} at time $t=0$.

Throughout this paper, we make the following assumptions:
\begin{assumption}\label{assump:GF}
  Assume the service time distribution $G(\cdot)$ is continuous with finite mean $1/\mu$; and the patience time distribution $F(\cdot)$ is Lipschitz continuous.
\end{assumption}

According to Theorem~3.1 in \cite{Zhang2013}, under Assumption~\ref{assump:GF}, there exists a unique solution to the fluid model $\FM$ for any valid initial condition $(\fmm{R}(0), \fmm{Z}(0))$.
The theorem justifies the definition of the fluid model and provides a foundation to study the fluid model further. 
Theorem~3.3 in \cite{Zhang2013} also shows that the fluid model solution serves as the fluid limit of the many-server queueing models.

\section{Convergence to Equilibrium States}
\label{sec:conv-invar-stat}

A key property of the fluid model is that it has an equilibrium state. An equilibrium state is defined intuitively as the state from which the fluid model solution starts and remains. More precisely, $(\fmm{R}_\infty,\fmm{Z}_\infty)$ is an \emph{equilibrium state} of the fluid model $\FM$ if the solution to the fluid model with a valid initial condition $(\fmm{R}_\infty,\fmm{Z}_\infty)$ satisfies
\begin{equation*}
  (\fmm{R}(t),\fmm{Z}(t))=(\fmm{R}_\infty,\fmm{Z}_\infty),\quad
  \textrm{for all }t\ge 0.  
\end{equation*}
The equilibrium state is characterized in Theorem~3.2 in \cite{Zhang2013}. The state $(\fmm{R}_\infty,\fmm{Z}_\infty)$ is an equilibrium state of the fluid model $\FM$ if and only if it satisfies
\begin{align}
  \label{eq:eqm-B}
  \fmm{R}_\infty(C_x)&=\lambda\int_0^{\omega}F^c(x+s)ds, \ x\in \R,\\
  \label{eq:eqm-S}
  \fmm{Z}_\infty(C_x)&=\min\left(\rho,1\right)[1-G_e(x)],\ x\in \mathbb{R}_+,
\end{align}
where $\rho=\lambda/\mu$ is the traffic intensity, $w$ is a solution to the equation
\begin{equation}\label{eq:eqm-w}
  F(\omega)=\max\left(\frac{\rho-1}{\rho},0\right),
\end{equation}
and $G_e(\cdot)$ is the associated \emph{equilibrium} distribution of $G(\cdot)$ and is defined by
\begin{equation}
\label{eq:Ge}
  G_e(x)=\mu\int_0^xG^c(y)dy,\quad\textrm{for all }x\ge 0.  
\end{equation}
If equation \eqref{eq:eqm-w} has multiple solutions, then there will be multiple equilibria (any solution $w$ corresponds to an equilibrium). If the equation has a unique solution (for example when $F(\cdot)$ is strictly increasing), then the equilibrium state is unique.

The quantity $w$ is interpreted as the \emph{virtual} waiting time for an arriving customer. Roughly, in the equilibrium state, if a customer's patience time exceeds $w$, he will not abandon queue. This implies that the fraction of abandonment is $F(\omega)$, which is equal to $(\rho-1)^+/\rho$.  From \eqref{eq:eqm-B}, it is easy to see that the fluid content in the virtual buffer is
\begin{displaymath}
  \bar R_\infty = \fmm{R}_\infty(\R) = \lambda \omega,
\end{displaymath}
which is consistent with Little's law. From \eqref{eq:eqm-S}, the fluid content in the server pool is
\begin{displaymath}
  \bar Z_\infty = \fmm{Z}_\infty(C_0) = \min(\rho, 1).
\end{displaymath}
If $\rho>1$, then all servers will be busy, whereas if $\rho<1$, there will be no abandonment (on the fluid scale) and the fraction of busy servers will be $\rho$.  These observations and interpretations were first made by \cite{Whitt2006}, where approximation formulas based on a conjectured fluid model were also given, and were compared with extensive simulation results.  The approximation formulas derived from our fluid model is consistent with those in \cite{Whitt2006}.

\begin{theorem}\label{thm:con-inv}
  Under Assumption~\ref{assump:GF}, for any valid initial condition $(\fmm{R}(0), \fmm{Z}(0))$ if
  \begin{equation}
    \label{eq:initial-Z-vanish}
    \lim\limits_{x\to\infty}\fmm{Z}(0)(C_x) = 0,    
  \end{equation}
  then the fluid model solution converges to
  \begin{equation*}
    \lim_{t\to\infty}\left(\fmm{R}(t), \fmm{Z}(t)\right)
    = \left(\fmm{R}_\infty, \fmm{Z}_\infty\right).
  \end{equation*}
\end{theorem}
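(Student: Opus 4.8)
The plan is to show convergence in two stages: first establish convergence of the scalar quantities $\fm R(t)$, $\fm Z(t)$, $\fm B(t)$ (and hence $\fm X(t)$, $\fm Q(t)$), then bootstrap this to convergence of the full measure-valued state $(\fmm R(t), \fmm Z(t))$ via the explicit integral representations \eqref{eq:fluid-def-R}--\eqref{eq:fluid-def-Z}. The key observation driving the scalar analysis is that \eqref{eq:fluid-def-R} together with \eqref{eq:B(t)} shows $\fm B(t) - \fm B(s)$ is, up to the abandonment correction, a delayed version of the arrival process, and $\fm R(t)$ is determined by the frontier $t - \fm R(t)/\lambda$ through the monotone relation $\fm R(t) = \lambda\int_{t-\fm R(t)/\lambda}^t F^c(t-s)\,ds$ when testing at $x=0$ gives $\fm Q(t) = \lambda\int_{t-\fm R(t)/\lambda}^t F^c(t-s)\,ds$. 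I expect the cleanest route is to track $\fm B$: differentiating (or taking difference quotients of) the defining equations yields, at least in an overloaded regime, a renewal-type equation for $d\fm B$ with kernel built from $G$, to which the key renewal theorem (or its direct-Riemann-integrability refinement) applies, exactly in the spirit of \cite{ZDZ2009} and \cite{PSW2006}.

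The first step I would carry out is to pin down the candidate limits of the scalars from \eqref{eq:eqm-B}--\eqref{eq:eqm-w}: $\fm R(t) \to \lambda\omega$, $\fm Z(t) \to \min(\rho,1)$, and the service-entry rate $d\fm B(t)/dt \to \mu\min(\rho,1)$. Then I would split into the underloaded case $\rho < 1$ and the overloaded case $\rho \ge 1$ (the critical case $\rho = 1$ typically being the delicate boundary). In the underloaded case one expects the queue to empty: I would argue $\fm X(t) \le 1$ eventually, so $\fm R(t)\to 0$, $\fm Q(t)\to 0$, and \eqref{eq:fluid-def-Z} becomes a pure linear renewal equation $\fm Z(t)(C_x) = \fmm Z(0)(C_{x+t}) + \int_0^t G^c(x+t-s)\,d\fm B(s)$ with $\fm B(t)\approx \lambda t$; the assumption \eqref{eq:initial-Z-vanish} is what kills the memory of the initial condition and lets the renewal theorem deliver $\fm Z(t)(C_x)\to \rho[1-G_e(x)]$. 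In the overloaded case all servers stay busy after some finite time ($\fm Z(t) = 1$), so $d\fm B(s) = F^c(\fm R(s)/\lambda)^{-1}\cdot(\text{departure rate})\,ds$ couples back through \eqref{eq:fluid-def-Z} at $x=0$: $1 = \fmm Z(0)(C_t) + \int_0^t F^c(\fm R(s)/\lambda) G^c(t-s)\,d\fm B(s)$, which I would massage into a renewal equation for the entry rate $\beta(t) = d\fm B(t)/dt$ and for $\fm R(t)$ simultaneously, then invoke Appendix~\ref{sec:aux-lemmas}'s auxiliary lemmas plus the renewal theorem to get $\beta(t)\to\mu$ and $\fm R(t)\to\lambda\omega$ (using Lipschitz continuity of $F$ to control the nonlinearity $F^c(\fm R(s)/\lambda)$ via a contraction/continuity argument around the fixed point $\omega$).

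Once the scalars converge, the second stage is comparatively mechanical: for $\fmm R(t)(C_x)$ I would plug the limit $\fm R(t)\to\lambda\omega$ into \eqref{eq:fluid-def-R} and use dominated convergence together with continuity of $F^c$ (Lipschitz gives uniform control of the tail integral) to conclude $\fmm R(t)(C_x)\to \lambda\int_0^\omega F^c(x+s)\,ds = \fmm R_\infty(C_x)$, uniformly on compacts in $x$; for $\fmm Z(t)(C_x)$ I would again use the renewal-theorem convergence of the convolution $\int_0^t F^c(\fm R(s)/\lambda) G^c(x+t-s)\,d\fm B(s)$, now carrying the test parameter $x$, noting $\fmm Z(0)(C_{x+t})\to 0$ by hypothesis, to obtain $\fmm Z(t)(C_x)\to\min(\rho,1)[1-G_e(x)]$. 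Convergence of the finite measures then follows from convergence of their tail functions $C_x\mapsto \fmm\cdot(t)(C_x)$ at all continuity points together with convergence of total masses.

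\textbf{The main obstacle} I anticipate is the overloaded case: unlike \cite{ZDZ2009}, the equation for $\fm B$ is genuinely nonlinear because of the abandonment factor $F^c(\fm R(s)/\lambda)$, and $\fm R(s)$ is itself defined only implicitly through \eqref{eq:fluid-def-R}; so the renewal equation one wants to write down has a state-dependent kernel, and one must first show $\fm R(t)$ stays bounded and bounded away from the degenerate regime, establish enough regularity (the difference quotients of $\fm B$ being well behaved, which is where Assumption~\ref{assump:GF}'s Lipschitz hypothesis on $F$ and continuity of $G$ enter), and only then linearize around the equilibrium $\omega$ to bring the key renewal theorem to bear. Handling the critical case $\rho = 1$ — where $\omega = 0$ and the queue is marginally stable — will likely require a separate, more careful argument showing $\fm R(t)\to 0$ without the geometric-type decay available when $\rho\ne 1$.
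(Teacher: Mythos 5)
There is a genuine gap, and it sits exactly where you flag your ``main obstacle'': the critically loaded and overloaded regimes are not actually proved, and the route you propose for them is problematic. Writing a renewal equation for $\beta(t)=d\fm{B}(t)/dt$ presupposes that $\fm{B}$ has a well-behaved density, which is not available a priori from \eqref{eq:fluid-def-R}--\eqref{eq:non-idling-z}; more importantly, the kernel you obtain carries the state-dependent factor $F^c(\fm{R}(s)/\lambda)$, so the key renewal theorem does not apply to it directly, and the suggested ``linearize/contract around the fixed point $\omega$'' has no supplied mechanism (and cannot be a contraction argument in general, since $F$ need not be strictly increasing near $\omega$ and \eqref{eq:eqm-w} may have multiple solutions). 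You also leave the critical case $\rho=1$ explicitly open. A smaller but real gap of the same kind appears in your underloaded case: the assertion that $\fm{X}(t)\le 1$ eventually is precisely the nontrivial step; the paper spends most of that case proving it, via the renewal representation $\fm{Q}=\fm{K}*U_G$ and a contradiction argument showing the set $\{t:\fm{Q}(t)>0\}$ has finite Lebesgue measure. Finally, the candidate limit you state for $d\fm{B}/dt$ conflates $\fm{B}$ with the flow actually entering service: since $\fm{B}(t)=\lambda t-\fm{R}(t)$ and $\fm{R}(t)$ converges, $d\fm{B}/dt\to\lambda$; it is the auxiliary process $\fm{A}(t)=\int_0^t F^c(\fm{R}(s)/\lambda)\,d\fm{B}(s)$ whose rate tends to $\mu\min(\rho,1)$.

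The missing idea, which the paper supplies, is to avoid the renewal theorem and any regularity of $\fm{B}$ altogether by working with $\fm{A}$ through its two links \eqref{eq:A(t)_Q(t)} and \eqref{eq:A(t)_Z(t)}, together with the monotone comparison result (Lemma~\ref{lem:comparasion}): $\fm{A}$ is nondecreasing in the arrival rate. This lets the critical case be squeezed by underloaded models with $\lambda_1\uparrow\lambda$ and the overloaded case by comparison with the critical one, yielding the Ces\`aro limit $\fm{A}(t)/t\to\min(\lambda,\mu)$ (the upper bound coming from $\fm{A}=(\fm{Z}-\fmm{Z}(0)(C_\cdot))*U_G$ and $U_G(t)/t\to\mu$). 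Combined with $\fm{Q}(t)=o(t)$ (Lemma~\ref{le:Q(t)=o(t)}) and \eqref{eq:A(t)_Q(t)}, this gives pointwise convergence of $H(\fm{Q}(t))$, hence of $\fm{Q}(t)$ and $\fm{R}(t)$, with no differentiation and no direct-Riemann-integrability issues. Your second stage (passing from scalar limits to the measures via \eqref{eq:R(t)Measure} and the $\fmm{Z}$ representation, using \eqref{eq:initial-Z-vanish} to kill the initial content) is sound and essentially what the paper does, except that the paper again avoids the renewal theorem there by time-shifting the fluid model (Lemma~\ref{lem:shift}) and sandwiching $\fmm{Z}_\tau(t)(C_x)$ between $\varepsilon$-perturbations of the equilibrium expression. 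As written, your proposal identifies the right targets but does not close the hardest cases, so it does not yet constitute a proof.
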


Note that the condition \eqref{eq:initial-Z-vanish} is mild. We do not even require the initial remaining workload in the server $\int_0^\infty\fmm{Z}(0)(C_x)dx$ to be finite.

\section{Preliminary Analysis}
\label{sec:some-prel-analys}

We now provide a preliminary analysis of the fluid model. 
Introduce two new functions
\begin{align}
  \label{eq:F_d}
  F_d(x) &= \int_0^x(1-F(y))dy,\\
  \label{eq:H(x)}
  H(x) &= 
  \begin{cases}
    F^c\brackets{F_d^{-1}\brackets{\frac{x}{\lambda}}}, 
    & \text{if }0\leq x <\lambda N_F,\\
    0, & \text{if } x\geq \lambda N_F,
  \end{cases}
\end{align}
where $N_F$ is the mean of the patience time, i.e.,
$ N_F = \int_0^\infty[1-F(y)]dy$.
Note that scaling $F_d(\cdot)$ using $N_F$ yields the equilibrium distribution of $F(\cdot)$. In this paper, we do not assume that patience time has a finite mean. Thus, $N_F$ can be either finite or infinite.  
In \cite{Zhang2013}, the following key equation is derived from the fluid equations \eqref{eq:fluid-def-R}--\eqref{eq:non-idling-z},
\begin{equation}\label{eq:X(t)}
  \fm{X}(t) = \fmm{Z}(0)(C_t)+\fm{Q}(0)G^c(t)
  +\frac{\lambda}{\mu}\int_0^t H\left((\fm{X}(t-s)-1)^+\right)dG_e(s)
  +\int_0^t(\fm{X}(t-s)-1)^+dG(s).
\end{equation}
This equation serves as the key to obtaining the existence and uniqueness of the fluid model solution. Again, it will play an important role in establishing the convergence to equilibrium states in this paper. 

To facilitate understanding as well as for technical proofs, we introduce an auxiliary process
\begin{equation}
  \label{eq:A-B}
  \fm{A}(t) = \int_0^t F^c\brackets{\frac{\fm{R}(s)}{\lambda}}d\fm{B}(s).
\end{equation}
% ------------ connect to Q -------------
According to the fluid dynamic equation \eqref{eq:fluid-def-R},
\begin{align}
\label{eq:R(t)Measure}
  \fmm{R}(t)(C_x) 
  = \lambda \int_0^\frac{\fm{R}(t)}{\lambda}F^c(x+s)ds.
\end{align}
Plugging $x=0$ into the above equation gives the fluid queue length
\begin{equation}
  \label{eq:Q(t)}
  \fm{Q}(t) = \lambda\int_{t-\frac{\fm{R}(t)}{\lambda}}^{t}F^c(t-s)ds=\lambda\int_0^{\frac{\fm{R}(t)}{\lambda}}F^c(s)ds.
\end{equation}
Utilizing \eqref{eq:B(t)}, \eqref{eq:H(x)} and \eqref{eq:Q(t)}, the auxiliary process can be written as
\begin{equation}\label{eq:A(t)_Q(t)}
\begin{split}
  \fm{A}(t)            % &=\lambda\int_0^t F^c\brackets{\frac{\fm{R}(s)}{\lambda}}ds-\int_0^t F^c\brackets{\frac{\fm{R}(s)}{\lambda}}d\fm{R}(s)\\
                       %  &=\lambda\int_0^t F^c\brackets{\frac{\fm{R}(s)}{\lambda}}ds-\fm{Q}(t)+\fm{Q}(0)\\
                       %  &=\lambda\int_0^t F^c\brackets{F_d^{-1}\brackets{\frac{\fm{Q}(s)}{\lambda}}}ds-\fm{Q}(t)+\fm{Q}(0)\\
                        &=\lambda\int_0^t H(\fm{Q}(s))ds-\fm{Q}(t)+\fm{Q}(0).
\end{split}
\end{equation}
It follows from Lemma~A.3 in \citet{Zhang2013} that $\fm{A}(t)$ is non-decreasing in $t$.
% ------------ connect to Z -------------
On the other side, if we use the fluid dynamic equation \eqref{eq:fluid-def-Z}, then
\begin{align}
  \label{eq:Z(t)(Cx)}
  \fmm{Z}(t)(C_x)&=\fmm{Z}(0)(C_{x+t})+\int_0^t G^c(x+t-s)d \fm{A}(s),\\
  \label{eq:Z(t)(Cx)byParts}
  &=\fmm{Z}(0)(C_{x+t})+G^c(x)\fm{A}(t)-\int_0^t\fm{A}(t-s)d G(x+s),
\end{align}
where the second equality uses integration by parts.
Plugging $x=0$ into \eqref{eq:Z(t)(Cx)byParts} yields the relation between $\fm{A}$ and $\fm{Z}$
\begin{equation}\label{eq:A(t)_Z(t)}
\fm{A}(t)=\fm{Z}(t)-\fmm{Z}(0)(C_t)+\int_0^t\fm{A}(t-s)dG(s).
\end{equation}
The solution to the above renewal equation is
\begin{equation}\label{eq:A(t)Convolution}
\fm{A}(t)=\left( \fm{Z}(t)-\fmm{Z}(0)(C_t) \right)*U_G(t),
\end{equation}
where $U_G(t)=\sum_{n=0}^{\infty}G^{n*}(t)$ and $G^{n*}(t)$ is the $n$-fold convolution of $G(t)$ with itself.
The nice thing about the process $\fm{A}$ is that it connects to the process $\fm{Q}$ via the integral equation \eqref{eq:A(t)_Q(t)} and connects to the process $\fm{Z}$ via the convolution equation \eqref{eq:A(t)_Z(t)}.

\paragraph{Intuition.}
The auxiliary process $\fm{A}$ has an intuitive explanation. It can be regarded as the ``flow'' into the server pool. 
According to \eqref{eq:B(t)}, the fluid content in the virtual buffer at time $t$ is
\begin{equation*}
  \fm{R}(t) = \fm{R}(0)+ \lambda t - [\fm{B}(t)-\fm{B}(0)].
\end{equation*}
So $\fm{B}(t)-\fm{B}(0)$ can be understood as the ``flow" that has left the virtual buffer by time $t$. From \eqref{eq:A-B}, we see that the flow $\fm{A}$ is just a part of the flow $\fm{B}(t)-\fm{B}(0)$. We depict all the ``flows'' in Figure~\ref{fig:flow-diagram} for the many-server queueing model. 
Let $\fm{L}_2(t)=\fm{B}(t)-\fm{B}(0)-\fm{A}(t)$ be the difference between the aforementioned two flows. Note that $\fm{L}_2$ is not the fluid abandonment process since part of the fluid $\fm{L}_1(t)=\fm{R}(t)-\fm{Q}(t)$ in the virtual buffer has already ``abandoned'' the system (remaining patience time being less than or equal to zero). So the cumulative amount of fluid that has abandoned the system by time $t$ is
\begin{equation*}
  \fm{L}(t)=\fm{L}_1(t)+\fm{L}_2(t).
\end{equation*}
We can also write the fluid service completion process as
\begin{equation}
  \label{eq:service-completion}
  \fm{S}(t)=\fmm{Z}(0)((0,t])+\int_0^tG(t-s)d\fm{A}(s).
\end{equation}
By plugging $x=0$ into \eqref{eq:Z(t)(Cx)byParts}, one can easily verify that the balance equation
\begin{equation*}
  \fm{Z}(t) = \fm{Z}(0) + \fm{A}(t) - \fm{S}(t)
\end{equation*}
holds. 
Neither the abandonment process nor the service completion process is needed in the proof of convergence. The characterization is given for completeness and potential future applications.

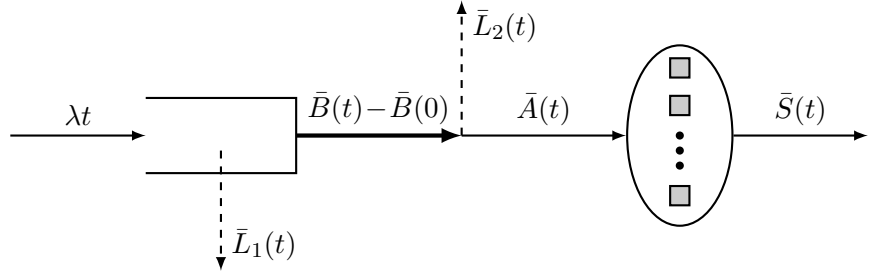
\begin{figure}[htbp]
\centering
\begin{tikzpicture}[thick,scale=1, every node/.style={scale=1}]
\tikzstyle{coor-format} = [coordinate]
\tikzstyle{dote}=[circle,draw=black, fill=black,inner sep=0.8pt]
\tikzstyle{p-format} = [ellipse, draw=black, thick,
                            minimum height=2.4cm, minimum width=1.4cm]
\tikzstyle{s-format} = [rectangle, draw=black, fill=black!20]
\def\rectanglepath{ --++(2cm,0cm)--++(0cm,1.cm) -- ++(-2cm, 0cm)}
\node [coor-format] (In0) {};
%buffer
\node [coor-format,below of=In0,node distance =0.5cm] (buffer) {};
\draw (buffer) \rectanglepath;
%arrival
\node [coor-format,left of=In0,node distance =1.8cm]
(Inneg) {};
\draw [draw,-latex,thick](Inneg) --node[above]{$\lambda t$} (In0);
%serverpool
\node [coor-format,right of=In0,node distance =2cm] (In1){};
\node [p-format, right of=In1, node distance=5.1cm] (Pool) {};

%enterservice
\node [coor-format,right of=In1,node distance =2.2cm]
(B0){};
\node [coor-format,label=-45:{$\bar{L}_2(t)$},above of=B0,node distance =1.8cm]
(B1){};
\draw [draw,-latex,thick,dashed](B0) --(B1);
\draw [draw,-latex, ultra thick] (In1) --node[above]{$\bar{B}(t)\!-\!\bar{B}(0)$} (B0);
\draw [draw,-latex, thick] (B0) --node[above]{$\bar{A}(t)$} (Pool);
%departure
\node [coor-format, right of=Pool, node distance=2.5cm] (Out) {};
\draw[draw,-latex,thick](Pool)--node[above]{$\bar{S}(t)$}(Out);
%server
    \node [s-format,above of=Pool,node distance =0.9cm] (s1) {};
    \node [s-format,above of=Pool,node distance =0.4cm] (s2) {};
    \node [dote,above of=Pool,node distance =0.cm] (dote1) {};
    \node [dote,above of=Pool,node distance =-0.2cm] (dote2) {};
    \node [dote,above of=Pool,node distance =-0.4cm] (dote3) {};
    \node [s-format,below of=Pool,node distance =0.8cm] (s3) {};
%L
\coordinate (L0) at (1,-0.2);
\node [coor-format,label=45:{$\bar{L}_1(t)$},below of=L0,node distance =1.6cm](L1){};
\draw[draw,-latex,thick,dashed](L0)--(L1);
\end{tikzpicture}
\caption{Flow Diagram of the Many-server Queue}
\label{fig:flow-diagram}
\end{figure}

\section{Proof of the Convergence}
\label{sec:proof-convergence}

Using the three auxiliary tools (Lemmas~\ref{lem:shift}--\ref{lem:comparasion}) given in the appendix, we show the convergence of the measure-valued fluid model solution to equilibrium state. The essential step is to show the convergence of the total amount of fluid $\fm{X}(t)$ to some limit, which is also of independent interest.

\begin{proposition}\label{prop:X-converge}
  Under Assumption~\ref{assump:GF}, for any valid initial condition $(\fmm{R}(0), \fmm{Z}(0))$, the fluid model solution $(\fmm{R}(t), \fmm{Z}(t))$ satisfies
  \begin{equation*}
    \lim_{t\to\infty}\fm{X}(t) = \min(\rho,1) + \lambda\int_0^\omega F^c(x)dx,
  \end{equation*}
  where $\omega$ is given in \eqref{eq:eqm-w}.
\end{proposition}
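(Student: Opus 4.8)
The plan is to analyze the scalar equation \eqref{eq:X(t)} and show that $\fm{X}(t)$ converges; the limit is then identified by a short computation. Note first that, by \eqref{eq:non-idling-q}, the two integrands in \eqref{eq:X(t)} are exactly $H(\fm{Q}(t-s))$ and $\fm{Q}(t-s)$, where $\fm{Q}=(\fm{X}-1)^+$. Two preliminary remarks: (i) the forcing terms vanish as $t\to\infty$, since $\fmm{Z}(0)(C_t)\to 0$ by \eqref{eq:initial-Z-vanish} and $\fm{Q}(0)G^c(t)\to 0$ because $G$ is a distribution function; (ii) if $\fm{X}(t)\to x_\infty$ then, as $G$ and $G_e$ are probability distributions, \eqref{eq:X(t)} forces $x_\infty=\rho\,H((x_\infty-1)^+)+(x_\infty-1)^+$, and using \eqref{eq:F_d}, \eqref{eq:H(x)}, \eqref{eq:Q(t)} and the defining relation \eqref{eq:eqm-w} for $\omega$ one checks this reduces to $x_\infty=\min(\rho,1)+\lambda\int_0^\omega F^c(x)dx$. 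So the content is the \emph{existence} of the limit.

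I would first establish an a priori bound $\sup_{t\ge 0}\fm{X}(t)<\infty$. When the mean patience $N_F$ is finite this is immediate from \eqref{eq:Q(t)}: $\fm{Q}(t)=\lambda F_d(\fm{R}(t)/\lambda)\le\lambda N_F$, hence $\fm{X}(t)=\fm{Z}(t)+\fm{Q}(t)\le 1+\lambda N_F$. When $N_F=\infty$ I would use that $\omega<\infty$ (since $F(\infty)=1>(\rho-1)^+/\rho$) together with the throttling of the inflow term $\rho H(\fm{Q})$ when $\fm{Q}$ is large, comparing \eqref{eq:X(t)} with a dominating scalar renewal equation via Lemma~\ref{lem:comparasion}. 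Given the bound, set $\bar x=\limsup_{t}\fm{X}(t)$, $\underline x=\liminf_{t}\fm{X}(t)$, $\bar q=(\bar x-1)^+$, $\underline q=(\underline x-1)^+$. Passing $\limsup$ and $\liminf$ through \eqref{eq:X(t)}, using that $y\mapsto(y-1)^+$ is continuous and nondecreasing, that $y\mapsto H((y-1)^+)$ is continuous (continuity of $H$ follows from the Lipschitz continuity of $F$) and nonincreasing, and that the initial-transient part of each convolution is asymptotically negligible (since $G,G_e$ are distribution functions), I obtain
\begin{equation*}
  \bar x\ \le\ \rho\,H(\underline q)+\bar q, \qquad\qquad \underline x\ \ge\ \rho\,H(\bar q)+\underline q,
\end{equation*}
equivalently $\bar x\wedge 1\le\rho H(\underline q)$ and $\underline x\wedge 1\ge\rho H(\bar q)$. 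If $\bar x\le 1$ — which holds automatically when $\rho<1$, since then $\bar x\wedge 1\le\rho H(\underline q)\le\rho<1$ — these force $\bar q=\underline q=0$ and hence $\bar x=\underline x=\rho H(0)=\rho$, which is the assertion.

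The remaining case $\bar x>1$ (hence $\rho\ge 1$) is the heart of the argument and the step I expect to be the main obstacle, because the two displayed inequalities are consistent with $\bar q>\underline q$ (heuristically, in \eqref{eq:X(t)} the inflow term ``prefers'' a small recent past while the queue term ``prefers'' a large one), so oscillation is not excluded by them alone; one must use the dynamics. My plan is to show that in this regime the relevant scalar dynamics are \emph{monotone}, so that oscillation is impossible. One first argues $\fm{Z}(t)\to\min(\rho,1)$: the possibility that $\fm{X}$ dips persistently below $1$ is ruled out using the inequalities above together with the auxiliary identities \eqref{eq:A(t)_Q(t)}--\eqref{eq:A(t)_Z(t)} and the monotonicity of $\fm{A}$. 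Then, combining \eqref{eq:A(t)_Q(t)} with the renewal representation of $\fm{A}$ in \eqref{eq:A(t)Convolution}, applying renewal theorems (using $\fm{Z}(t)\to\min(\rho,1)$ and the vanishing of the forcing), and exploiting the shift lemma (Lemma~\ref{lem:shift}) to pass to a limiting solution, one reduces the asymptotics of $\fm{Q}$ to the monotone scalar relation $\fm{Q}'=\lambda H(\fm{Q})-\mu\min(\rho,1)$. Since $H$ is nonincreasing, this scalar vector field is nonincreasing in $\fm{Q}$, which precludes oscillation, so $\fm{Q}(t)$ converges monotonically to a zero $q_\infty$ of $\lambda H(q)=\mu\min(\rho,1)$; by \eqref{eq:Q(t)} and \eqref{eq:eqm-w} this zero is $q_\infty=\lambda\int_0^\omega F^c(x)dx$, and therefore $\fm{X}(t)\to\min(\rho,1)+\lambda\int_0^\omega F^c(x)dx$. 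The comparison lemma (Lemma~\ref{lem:comparasion}) is the vehicle for making the monotone-reduction and the $\fm{Z}$-limit steps rigorous; everything outside the no-oscillation argument is routine bookkeeping with \eqref{eq:X(t)} and the renewal structure.
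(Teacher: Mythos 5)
Your fixed-point identification and the $\rho<1$ branch are fine in spirit, but both rest on an a priori bound $\sup_{t\ge0}\fm{X}(t)<\infty$ that you only establish when $N_F<\infty$. For $N_F=\infty$ your proposed route --- ``throttling'' plus a dominating scalar renewal equation ``via Lemma~\ref{lem:comparasion}'' --- does not deliver this: that lemma compares two fluid models with ordered \emph{arrival rates} and yields only $\fm{A}_1\le\fm{A}_2$ and $\fm{S}_1\le\fm{S}_2$; it gives no pointwise domination of $\fm{Q}$ or $\fm{X}$. In the paper, the bound \eqref{eq:Q-bound-underloaded} is itself a nontrivial byproduct of the renewal-equation contradiction argument (showing $m(\mathcal S)<\infty$ for $\mathcal S=\{t:\fm{Q}(t)>0\}$), and in the critical and overloaded cases no uniform bound is ever proved or needed: the paper divides \eqref{eq:A(t)_Q(t)} by $t$ and uses only the sublinearity Lemma~\ref{le:Q(t)=o(t)}. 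Without boundedness, passing $\limsup/\liminf$ through $\int_0^t\fm{Q}(t-s)dG(s)$ in \eqref{eq:X(t)} is not justified, so even your ``easy'' case is incomplete as written.

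The more serious gap is exactly the case $\rho\ge1$ that you defer to a plan. Its central step --- reducing the asymptotics to the scalar monotone relation $\fm{Q}'=\lambda H(\fm{Q})-\mu\min(\rho,1)$ --- is not justified and is false as a pointwise statement: the instantaneous service-completion rate is determined by the measure $\fmm{Z}(t)$, i.e.\ by the whole history of $\fm{A}$ through \eqref{eq:service-completion}, and equals $\mu\min(\rho,1)$ only asymptotically on average, which is precisely what must be proved; likewise ``first argue $\fm{Z}(t)\to\min(\rho,1)$'' in the overloaded case presupposes that the queue eventually stays positive, again part of the conclusion. A nonincreasing feedback acting through a convolution/delay does not by itself preclude oscillation, and in the critical case $\lambda H(q)=\mu$ can have a continuum of roots (e.g.\ when $F$ vanishes near the origin), so even granting your ODE the limit would not be identified (the paper shows $\fm{Q}(t)\to0$ there). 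The paper's actual mechanism, absent from your proposal, is a long-run rate argument on $\fm{A}$: prove the underloaded case by the renewal-equation contradiction; then bootstrap by comparing with an underloaded model of rate $\lambda_1<\lambda$ (resp.\ a critical model of rate $\mu$) through Lemma~\ref{lem:comparasion} to get $\liminf_t\fm{A}(t)/t\ge\lambda_1$ (resp.\ $\ge\mu$), combine with $\limsup_t\fm{A}(t)/t\le\mu$ from \eqref{eq:A(t)Convolution}, and then read off $\lim_t\lambda H(\fm{Q}(t))$ from \eqref{eq:A(t)_Q(t)} and Lemma~\ref{le:Q(t)=o(t)}, inverting $H$ to get $\lim_t\fm{Q}(t)$. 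You would need either to supply this kind of argument or a genuinely rigorous substitute for your monotone reduction; as it stands the heart of the proposition is unproved.
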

\begin{proof}
The proof is divided into three cases, the underloaded case ($\lambda<\mu$), the critically loaded case ($\lambda=\mu$) and the overloaded case ($\lambda>\mu$). 

% =========================================Under Loaded===========================================
\paragraph{Underloaded.} If $\lambda<\mu$, we just need to show that
\begin{equation}
  \label{eq:rho<1-inv}
  \lim_{t\to\infty}\fm{X}(t)=\frac{\lambda}{\mu}.
\end{equation}
By \eqref{eq:non-idling-q} and \eqref{eq:X(t)} we have,
\begin{equation*}
  \fm{Q}(t) = -\fm{Z}(t) + \fmm{Z}(0)(C_t)+\fm{Q}(0)G^c(t)
  +\frac{\lambda}{\mu}\int_0^t H(\fm{Q}(t-s))d G_e(s)+\int_0^t\fm{Q}(t-s)d G(s).
\end{equation*}
Define
\begin{equation}\label{eq:defK(t)}
  \fm{K}(t) = -\fm{Z}(t)+\fmm{Z}(0)(C_t)+\fm{Q}(0)G^c(t)+\frac{\lambda}{\mu}\int_0^t H(\fm{Q}(t-s))d G_e(s),
\end{equation}
then
\begin{equation}\label{eq:Q(t)renewal}
  \fm{Q}(t) = \fm{K}(t)+\int_0^t\fm{Q}(t-s)d G(s).
\end{equation}
So 
\begin{equation}\label{eq:Q(t)K(t)}
\fm{Q}(t)=\fm{K}(t)*U_G(t)=\int_0^t\fm{K}(t-s)d U_G(s).
\end{equation}
% ------ two cases ------
Consider the following two cases:\\
If $\fm{Q}(t)=0$, then by \eqref{eq:Q(t)renewal},
\begin{equation*}
  \fm{K}(t)=\fm{Q}(t)-\int_0^t\fm{Q}(t-s)d G(s)=0-\int_0^t\fm{Q}(t-s)d G(s)\leq 0.
\end{equation*}
If $\fm{Q}(t)>0$, then $\fm{Z}(t)=1$ due to non-idling constraints. Since $\lambda<\mu$ and $H(\cdot)\leq 1$, we can pick $\delta=(1-\lambda/\mu)/3$ which is positive such that $\frac{\lambda}{\mu}\int_0^tH(\fm{Q}(t-s))dG_e(s)\leq1-2\delta$. For this given $\delta>0$,  there exists a $T$ such that $\fmm{Z}(0)(C_t)+\fm{Q}(0)G^c(t)\leq \delta$ for all $t\geq T$.
It now follows from \eqref{eq:defK(t)} that for all $t\geq T$ where $\fm{Q}(t)>0$
\begin{align*}
  \fm{K}(t) &= -1+ \fmm{Z}(0)(C_t)+\fm{Q}(0)G^c(t)+\frac{\lambda}{\mu}\int_0^t H(\fm{Q}(t-s))d G_e(s)\\
            &\le -1+\delta+1-2\delta 
            = -\delta.
\end{align*}
% ------- introduce the set S -------
Denote by the set $\mathcal S=\{t\ge 0: \fm{Q}(t)>0\}$ the collection of times when the fluid queue is positive. We first prove by contradiction that $m(\mathcal S)<\infty$, where $m$ is the Lebesgue measure on real numbers. Now suppose $m(\mathcal S)=\infty$.
% ------- summarize two cases -------
Combining the above two cases, we have $\fm{K}(t)\leq 0$ for all $t\in[T,+\infty)$ and  $\fm{K}(t)\leq -\delta$ for all $t \in \mathcal S\cap [T,+\infty)$.  
% ------- show m(S)<\infty -------
Write the integral in \eqref{eq:Q(t)K(t)} in two parts,
\begin{equation}
  \label{eq:Q(t)bounded}
  \fm{Q}(t) = \int_0^{t-T}\fm{K}(t-s)d U_G(s)+\int_{t-T}^t \fm{K}(t-s)d U_G(s).
\end{equation}
It is clear that the first term in \eqref{eq:Q(t)bounded} is negative whenever $t>T$ since $\fm{K}(t-s)\leq 0$ for all $s\in[0,t-T]$. Moreover $\fm{K}(t-s)\leq -\delta$ for all $s\in{\mathcal S_{t,T}}$, where  $\mathcal S_{t,T}\triangleq\{s: t-s\in\mathcal S\cap [T,\infty)\}$. By the assumption that $m(\mathcal S)=\infty$, we have $m(\mathcal S_{t,T})\to\infty$ as $t\to\infty$. So the first term on the right-hand side of \eqref{eq:Q(t)bounded}
\begin{equation*}
  \int_0^{t-T}\fm{K}(t-s)dU_G(s)\leq\int_{\mathcal S_{t,T}}-\delta dU_G(s),
\end{equation*}
which converges to $-\infty$ as $t\to\infty$.
The second term on the right-hand side of \eqref{eq:Q(t)bounded} is essentially an integral on a finite interval. Let $M=\sup_{s\in[0,T]}K(s)$, then
\begin{equation*}
  \int_{t-T}^t K(t-s)dU_G(s)\leq M\int_{t-T}^tdU_G(s)\to\mu MT,
\end{equation*}
as $t\to\infty$. So we have $\lim\limits_{t\to\infty}\fm{Q}(t)=-\infty$, which contradicts $\fm{Q}(t)\geq 0$ for all $t\ge0$. Thus we have proved $m(\mathcal S)<\infty$. As a byproduct, the above analysis also yields an upper bound for the fluid queue length process. Since the first term on the right-hand side of \eqref{eq:Q(t)bounded} is always less than or equal to 0 for all $t\ge T$, and the second term has an asymptotic upper bound, there exists a constant $M_1$ such that 
\begin{equation}
  \label{eq:Q-bound-underloaded}
  \sup_{t\ge 0}\fm{Q}(t)\leq M_1+\mu M T.
\end{equation}
Since $m(\mathcal S)<\infty$, for any $\varepsilon>0$ there exists a $\tau$ such that 
$m\brackets{\mathcal S\cap [\tau,+\infty)}<\varepsilon$. 
Now consider the fluid model shifted by time $\tau$ as in Lemma~\ref{lem:shift}. 
Let $S_{\tau+t}\triangleq\{s: \tau+t-s\in\mathcal S\cap [\tau,\infty)\}$, then 
\begin{equation}
  \label{eq:tech-small-S}
  m\left(\mathcal S_{\tau+t}\right) \leq m\brackets{\mathcal{S}\cap[\tau,+\infty)}<\varepsilon.  
\end{equation}
Since $G(\cdot)$ is continuous by Assumption~\ref{assump:GF}, we can choose an $\varepsilon$ small enough and a corresponding $\tau$ such that
\begin{equation*}
  \int_0^t\fm{Q}_{\tau}(t-s)dG(s)\leq (M_1+\mu MT)\int_{\mathcal S_{\tau+t}} dG(s)
  \leq \frac{1}{2}(1-\frac{\lambda}{\mu}),
\end{equation*}
where the first inequality is due to \eqref{eq:Q-bound-underloaded} and the second one follows from \eqref{eq:tech-small-S}, Theorem~12.34 in \cite{HewittStromberg1975} and the fact that $\lambda/\mu<1$. 
Now by \eqref{eq:shift_X(t)} in Lemma~\ref{lem:shift} and that $H(\cdot)\leq1$,
\[\fm{X}_{\tau}(t)\leq \fmm{Z}(\tau)(C_t)+\fm{Q}(\tau)G^c(t)+\frac{\lambda}{\mu}+\frac{1}{2}(1-\frac{\lambda}{\mu})=\fmm{Z}(\tau)(C_t)+\fm{Q}(\tau)G^c(t)+\frac{\lambda}{2\mu}+\frac{1}{2}.\]
Therefore there exists a $\tau_1$ such that $\fm{X}(t)<1$, consequently $\fm{Q}(t)=0$, for all $t\geq\tau_1$.
Replacing $\tau$ in \eqref{eq:shift_A(t)} by $\tau_1$ gives $\fm{A}_{\tau_1}(t)=\lambda t$.
Now plugging $\tau_1$ and $x=0$ into \eqref{eq:shift_Z(t)Measure} yields 
\begin{align*}
  \fm{Z}_{\tau_1}(t)
  &= \fmm{Z}({\tau_1})(C_t)+\lambda\int_0^t G^c(t-s) ds.
\end{align*}
Replacing $(t,x)$ in \eqref{eq:Z(t)(Cx)} by $(\tau_1,t)$, it follows from the monotonicity of $\fm{A}(\cdot)$ and \eqref{eq:initial-Z-vanish} that $\fmm{Z}({\tau_1})(C_t)$ vanishes as $t\to\infty$.
The second term on the right-hand side in the above converges to $\lambda/\mu$ due to Assumption~\ref{assump:GF}. So we have proved \eqref{eq:rho<1-inv}.

% =========================================Critically Loaded===========================================

\paragraph{Critically loaded.}
If $\lambda=\mu$, we only need to show that
\begin{equation}
  \label{eq:rho=1-inv}
  \lim_{t\to\infty}\fm{X}(t)=1.
\end{equation}
Same as in Lemma~\ref{lem:comparasion}, introduce a fluid model with arrival rate $\lambda_1<\lambda$ while keeping service and patience time distributions and the initial condition the same. Append the subscript $1$ to the new fluid model. 
Because $\lambda_1<\lambda=\mu$, it follows from the underloaded case that $\lim\limits_{t\to\infty}\fm{Q}_1(t)=0$. Then by \eqref{eq:A(t)_Q(t)}, we have
\begin{align*}
  \lim_{t\to\infty}\frac{\fm{A}_1(t)}{t}
  &= \lim_{t\to\infty}
  \brackets{\lambda_{1}\frac{\int_0^tH(\fm{Q}_1(s))ds}{t}-\frac{\fm{Q}_1(t)}{t}+\frac{\fm{Q}_1(0)}{t}}\\
  &=\lim_{t\to\infty}\lambda_1H(\fm{Q}_1(t))\\
  &=\lambda_1.
\end{align*}
According to Lemma~\ref{lem:comparasion}, $\fm{A}_1(t)\leq\fm{A}(t)$ for all $t\geq 0$.
So
\begin{equation*}
  \liminf_{t\to\infty}\frac{\fm{A}(t)}{t}\geq \lim_{t\to\infty}\frac{\fm{A}_1(t)}{t}=\lambda_1.
\end{equation*}
Since $\lambda_1<\lambda$ but can be arbitrarily close to $\lambda$, $\liminf\limits_{t\to\infty}\frac{\fm{A}(t)}{t}\geq \lambda$.
On the other hand, by \eqref{eq:A(t)Convolution} and the fact that $\fm{Z}(t)-\fmm{Z}(0)(C_t)\leq 1$ (due to $\fm{Z}(t)\in[0,1]$ and $\fmm{Z}(0)(C_t)\geq 0$), 
\begin{equation}
  \label{eq:tech-bound-A/t}
  \limsup_{t\to\infty}\frac{\fm{A}(t)}{t}
  =\limsup_{t\to\infty}\frac{(\fm{Z}(t)-\fmm{Z}(0)(C_t))*U_G(t)}{t}\leq\lim_{t\to\infty}\frac{U_G(t)}{t}
  =\mu.  
\end{equation}
Since $\lambda=\mu$, we must have
\begin{equation*}
  \lim_{t\to\infty}\frac{\fm{A}(t)}{t} = \lambda.
\end{equation*}
By taking limit on both sides of \eqref{eq:A(t)_Q(t)} after dividing it by $t$ and applying Lemma~\ref{le:Q(t)=o(t)},  the above limit yields
\begin{equation}
  \label{eq:tech-critical-H}
  \lim_{t\to\infty} H(\fm{Q}(t)) = 1.
\end{equation}
So for any $\varepsilon>0$, there exists a $\tau$ such that $H(\fm{Q}(t))\geq 1-\varepsilon$ for all $t>\tau$. According to \eqref{eq:shift_X(t)}, for the time-shifted fluid model we have
\begin{align*}
  \fm{X}_{\tau}(t)
  &=\fmm{Z}(\tau)(C_t)+\fm{Q}(\tau)G^c(t)+\frac{\lambda}{\mu}\int_0^t H(\fm{Q}_{\tau}(t-s))d G_e(s)+\int_0^t\fm{Q}_{\tau}(t-s)d G(s)\\
  &\geq \int_0^t H(\fm{Q}_{\tau}(t-s))d G_e(s)\\
  &\geq (1-\varepsilon)G_e(t).  
\end{align*}
This implies $\liminf\limits_{t\to\infty}\fm{X}(t)\geq 1$. 
Also by \eqref{eq:tech-critical-H} and the shape of $H(\cdot)$ in \eqref{eq:H(x)}, we must have $\lim\limits_{t\to\infty}\fm{Q}(t)=0$.
So \eqref{eq:rho=1-inv} must hold.

% =========================================Over Loaded===========================================

\paragraph{Overloaded}
If $\lambda>\mu$, we only need to show that
\begin{equation}
  \label{eq:rho>1-inv}
  \lim_{t\to\infty}\fm{X}(t)=1+\lambda\int_0^\omega F^c(x)dx,
\end{equation}
where $\omega$ is the solution to $F(\omega)=\frac{\rho-1}{\rho}$.
Introduce another fluid model with arrival rate $\lambda_2=\mu$ while keeping service and patience time distributions and the initial condition the same. Append the subscript $2$ to the new fluid model. This fluid model is actually critically loaded. So
\begin{equation*}
  \lim_{t\to\infty}\frac{\fm{A}_2(t)}{t}=\lambda_2=\mu.
\end{equation*}
It again follows from the comparison result in Lemma~\ref{lem:comparasion} that
\begin{equation*}
  \liminf_{t\to\infty}\frac{\fm{A}(t)}{t}\geq \lim_{t\to\infty}\frac{\fm{A}_2(t)}{t}=\mu.
\end{equation*}
Note that \eqref{eq:tech-bound-A/t} holds regardless of the load, so 
\begin{equation*}
  \lim_{t\to\infty}\frac{\fm{A}(t)}{t}=\mu.
\end{equation*}
Again, by taking limit on both sides of \eqref{eq:A(t)_Q(t)} after dividing it by $t$ and applying Lemma~\ref{le:Q(t)=o(t)}, the above limit yields
\begin{equation*}
  \lim_{t\to\infty} \lambda H(\fm{Q}(t)) = \mu.
\end{equation*}
Applying the form of $H(\cdot)$ in \eqref{eq:H(x)} gives
\begin{equation*}
  \lim_{t\to\infty} F\left(F_d^{-1}(\fm{Q}(t)/\lambda)\right)=\frac{\rho-1}{\rho}.
\end{equation*}
It follows from the definition of $F_d(\cdot)$ in \eqref{eq:F_d} that
\begin{equation*}
  \lim_{t\to\infty} \fm{Q}(t) = \lambda \int_0^\omega F^c(x) dx.
\end{equation*}
So \eqref{eq:rho>1-inv} is proved.
\end{proof}

Finally, we present the proof to our main result. 

\begin{proof}[Proof of Theorem~\ref{thm:con-inv}]
Since the space of real numbers is separable and $\fmm{R}_\infty(\{x\})=\fmm{Z}_\infty(\{x\})=0$ for all $x$, according to Property~(iv) of the Prohorov metric on page 72 in \cite{Billingsley1999}, it suffices to show that
\begin{align}
  \label{eq:tech-R-C_x}
  \lim_{t\to\infty}\fmm{R}(t)(C_x) &= \fmm{R}_{\infty}(C_x),\\ %=\lambda\int_0^\omega F^c(x+s)ds,\quad x\in \mathbb{R}.
  \label{eq:tech-Z-C_x}
  \lim_{t\to\infty}\fmm{Z}(t)(C_x) &= \fmm{Z}_\infty(C_x). %=\min(\rho,1)[1-G_e(x)],\quad x\in \mathbb{R}_+.
\end{align}
It follows from \Cref{prop:X-converge} that the limit of $\fm{Q}(t)$ is 
\begin{equation*}
  \label{Q(t)limits}
  \lim_{t\to\infty}\fm{Q}(t) = \fm{Q}_{\infty} 
  = \lambda\int_0^\omega F^c(x)dx, 
\end{equation*}
where $\omega$ is a solution to $F(\omega)=\max\left(\frac{\rho-1}{\rho},0\right)$.
Then by \eqref{eq:H(x)} and \eqref{eq:Q(t)},
\begin{align*}
  \lim_{t\to\infty} H(\fm{Q}(t)) 
  &= H(\fm{Q}_{\infty}) =
  \begin{cases}
    1, &\lambda\leq \mu,\\
    \mu/\lambda, &\lambda>\mu, 
  \end{cases}\\
  \lim_{t\to\infty}\fm{R}(t) 
  &= \fm{R}_{\infty} 
  = \lambda\omega.
\end{align*}
Hence, \eqref{eq:tech-R-C_x} follows from \eqref{eq:eqm-B}, \eqref{eq:R(t)Measure} and the above limit. 
Next we consider the convergence of $\fmm{Z}(t)(C_x)$.
According to the convergence of $\fm{Q}(t)$ and $H(\fm{Q}(t))$ in the above, for any $\varepsilon>0$, there exists a $\tau$ such that $|H(\fm{Q}(\tau+t))-H(\fm{Q}_\infty)|\leq \varepsilon$ and $|\fm{Q}(\tau+t)-\fm{Q}_\infty|\leq \varepsilon$ for $t>0$. 
Now consider the fluid model shifted by $\tau$. 
Plugging \eqref{eq:shift_A(t)} into \eqref{eq:shift_Z(t)Measure} yields
\begin{align*}
  \fmm{Z}_\tau(t)(C_x)
  &= \fmm{Z}(\tau)(C_{x+t})+\lambda\int_0^tH(\fm{Q}_\tau(t-s))G^c(x+s)ds\\
  &\quad -\fm{Q}_\tau(t)G^c(x)+\fm{Q}_\tau(0)G^c(x+t)+\int_0^t\fm{Q}_\tau(s)dG^c(x+t-s).
\end{align*}
On the one hand, we have
\begin{align*}
  \fmm{Z}_\tau(t)(C_x)
  &\geq \fmm{Z}(\tau)(C_{x+t})+\lambda\int_0^t(H(\fm{Q}_\infty)-\varepsilon)G^c(x+s)ds\\
  &\quad -\fm{Q}_\tau(t)G^c(x)+\fm{Q}_\tau(0)G^c(x+t)+\int_0^t(\fm{Q}_\infty-\varepsilon) dG^c(x+t-s)\\
  &= \fmm{Z}(\tau)(C_{x+t})+\lambda H(\fm{Q}_\infty)\int_0^tG^c(x+s)ds-\lambda\varepsilon\int_0^t G^c(x+s)ds\\
  &\quad -\fm{Q}_\tau(t)G^c(x)+\fm{Q}_\tau(0)G^c(x+t)+ \fm{Q}_\infty G^c(x)-\fm{Q}_\infty G^c(x+t)-\varepsilon\left(G^c(x)-G^c(x+t)\right).
\end{align*}
Since $\varepsilon>0$ can be arbitrarily small, the above inequality implies 
\begin{equation*}
  \liminf_{t\to\infty}\fmm{Z}(t)(C_x)\geq \lambda H(\fm{Q}_\infty)\int_0^\infty G^c(x+s)ds.
\end{equation*}
On the other hand, we can show in a similar way that
\begin{equation*}
  \limsup_{t\to\infty}\fmm{Z}(t)(C_x)\leq \lambda H(\fm{Q}_\infty)\int_0^\infty G^c(x+s)ds.
\end{equation*}
Thus we have
\begin{equation*}
  \label{Z(t)MeasureLimit}
  \lim_{t\to\infty}\fmm{Z}(t)(C_x)
  % =\lambda H(\fm{Q}_\infty)\int_0^\infty G^c(x+s)ds
  =\frac{\lambda H(\fm{Q}_\infty)}{\mu}\left( 1-\mu\int_0^x G^c(s)ds \right).
\end{equation*}
So \eqref{eq:tech-Z-C_x} follows from \eqref{eq:eqm-S} and \eqref{eq:Ge}.
\end{proof}

\section{Conclusion}
\label{sec:conclusion}

The proof is made possible by carefully analyzing the fluid dynamic equations \eqref{eq:fluid-def-R}--\eqref{eq:fluid-def-Z} and some auxiliary processes. 
The most important step is to show the convergence of total amount of the fluid process $\fm{X}(t)$. However, the measure-valued processes play a significant role in analyzing the real-valued process $\fm{X}(t)$. This paper contributes to the understanding of fluid models of many-server queues with general service and patience times. We believe the analytical tools we developed for the fluid models can contribute to the study of many-server queues with multiple customer classes and server pools, where a dynamic control policy plays a prominent role. 

\appendix

\section{Some Auxiliary Lemmas}
\label{sec:aux-lemmas}

\paragraph{Time shift of the fluid model.}
For any $\tau\geq 0$, denote $\left(\fmm{Z}_\tau(t),\fmm{R}_\tau(t)\right)=\left(\fmm{Z}(\tau+t),\fmm{R}(\tau+t)\right)$. The time shift for all the derived ``status'' quantities such as $\fm{Q}_\tau(\cdot)$, $\fm{R}_\tau(\cdot)$, $\fm{Z}_\tau(\cdot)$ and $\fm{X}_\tau(\cdot)$ are defined in the same way, e.g., $\fm{Q}_\tau(t)=\fm{Q}(\tau+t)$. However, let $\fm{A}_\tau(t)=\fm{A}(\tau+t)-\fm{A}(\tau)$ since $\fm{A}(t)$ records the ``cumulative'' amount of fluid that has entered service by time $t$. The time shift $\fm{B}_\tau(\cdot)$ is defined in the same way as that for the cumulative process $\fm{A}(\cdot)$. The following lemma says that the time-shifted fluid model can be regarded as ``restarting'' from time $\tau$ by viewing $\left(\fmm{Z}(\tau),\fmm{R}(\tau)\right)$ as the initial condition.  In particular, we show the time-shifted versions of \eqref{eq:X(t)}, \eqref{eq:A(t)_Q(t)} and \eqref{eq:Z(t)(Cx)}, which will be used in the proof of convergence.

\begin{lemma}\label{lem:shift}
Any time-shifted fluid model solution $\left(\fmm{Z}_\tau(t),\fmm{R}_\tau(t)\right)$ satisfies
\begin{align}\label{eq:shift_X(t)}
  &\fm{X}_{\tau}(t) = 
  \fmm{Z}(\tau)(C_t)+\fm{Q}(\tau)G^c(t)+\frac{\lambda}{\mu}
  \int_0^t H(\fm{Q}_{\tau}(t-s))d G_e(s)+\int_0^t\fm{Q}_{\tau}(t-s)d G(s),\\
%   \label{eq:shift_Z(t)}
%   &\fm{Z}_{\tau}(t) =
%   \fmm{Z}(\tau)(C_t)+\fm{A}_{\tau}(t)-\int_0^t\fm{A}_{\tau}(t-s)dG(s),\\
  \label{eq:shift_A(t)}
  &\fm{A}_\tau(t) =\lambda\int_0^tH(\fm{Q}_\tau(s))ds-\fm{Q}_\tau(t)+\fm{Q}_\tau(0),\\
  \label{eq:shift_Z(t)Measure}
  &\fmm{Z}_\tau(t)(C_x) =\fmm{Z}(\tau)(C_{x+t})+\int_0^tG^c(x+t-s)d\fm{A}_\tau(s).
\end{align}
\end{lemma}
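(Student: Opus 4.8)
The plan is to exploit the ``restart'' interpretation stated just before the lemma: the time-shifted pair $(\fmm{Z}_\tau(t),\fmm{R}_\tau(t))=(\fmm{Z}(\tau+t),\fmm{R}(\tau+t))$ is again a fluid model solution, now with $(\fmm{Z}(\tau),\fmm{R}(\tau))$ as a valid initial condition and with $\fm{B}_\tau,\fm{A}_\tau$ (the cumulative-increment shifts) as its own auxiliary processes. Once this is in place, \eqref{eq:shift_X(t)}--\eqref{eq:shift_Z(t)Measure} are nothing but the shifted instances of \eqref{eq:X(t)}, \eqref{eq:A(t)_Q(t)} and \eqref{eq:Z(t)(Cx)}, all of which were derived in \cite{Zhang2013} and in Section~\ref{sec:some-prel-analys} purely from the fluid dynamic equations \eqref{eq:fluid-def-R}--\eqref{eq:fluid-def-Z} and the non-idling constraints \eqref{eq:non-idling-q}--\eqref{eq:non-idling-z}.

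For \eqref{eq:shift_A(t)} and \eqref{eq:shift_Z(t)Measure} I would actually bypass the restart language and argue by direct subtraction, which is quicker. Writing \eqref{eq:A(t)_Q(t)} at time $\tau+t$ and at time $\tau$ and subtracting, the two $\fm{Q}(0)$ terms cancel and the integrals telescope, leaving $\fm{A}_\tau(t)=\fm{A}(\tau+t)-\fm{A}(\tau)=\lambda\int_0^tH(\fm{Q}(\tau+u))du-\fm{Q}(\tau+t)+\fm{Q}(\tau)$, which is \eqref{eq:shift_A(t)} after renaming. Similarly, evaluating \eqref{eq:Z(t)(Cx)} once at $(t,x)\mapsto(\tau+t,x)$ and once at $(t,x)\mapsto(\tau,x+t)$ and subtracting, the two $\fmm{Z}(0)$ terms coincide, and the remaining integral over $[\tau,\tau+t]$ becomes $\int_0^tG^c(x+t-u)d\fm{A}_\tau(u)$ after the substitution $s=\tau+u$ and the observation that $d\fm{A}_\tau(u)=d\fm{A}(\tau+u)$ (shifting by the constant $\fm{A}(\tau)$ does not change the integrator). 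This is \eqref{eq:shift_Z(t)Measure}.

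Equation \eqref{eq:shift_X(t)} does not collapse under a one-line subtraction, because the data terms $\fmm{Z}(0)(C_{\tau+t})$ and $\fm{Q}(0)G^c(\tau+t)$ in \eqref{eq:X(t)} do not telescope into $\fmm{Z}(\tau)(C_t)+\fm{Q}(\tau)G^c(t)$. Here I would genuinely invoke the restart: verify that $(\fmm{Z}_\tau,\fmm{R}_\tau)$ satisfies \eqref{eq:fluid-def-R}--\eqref{eq:non-idling-z} with $\fmm{Z}(\tau)$ as the initial measure. The non-idling constraints hold at every time, hence for $\fm{Q}_\tau,\fm{Z}_\tau,\fm{X}_\tau$; \eqref{eq:fluid-def-R} is a snapshot relation involving no history, so applying it at $\tau+t$ and substituting $s=\tau+u$ returns it for $\fmm{R}_\tau(t)$; and \eqref{eq:fluid-def-Z} is recovered by the same split-at-$\tau$ trick as above, writing \eqref{eq:fluid-def-Z} at $(\tau+t,x)$ minus \eqref{eq:fluid-def-Z} at $(\tau,x+t)$ and using that $\fm{B}_\tau$ has the same increments as $\fm{B}(\tau+\cdot)$ and $\fm{R}_\tau(u)=\fm{R}(\tau+u)$; along the way one checks $\fm{A}_\tau(t)=\int_0^tF^c(\fm{R}_\tau(s)/\lambda)d\fm{B}_\tau(s)$, i.e.\ $\fm{A}_\tau$ is the auxiliary process \eqref{eq:A-B} of the shifted model. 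With the shifted model thus identified as a bona fide fluid model solution, the derivation of \eqref{eq:X(t)} in \cite{Zhang2013} applies verbatim and produces \eqref{eq:shift_X(t)}, after rewriting $(\fm{X}_\tau(t-s)-1)^+$ as $\fm{Q}_\tau(t-s)$ via \eqref{eq:non-idling-q}.

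The only real care needed is the bookkeeping around the cumulative auxiliary processes: because $\fm{A}_\tau,\fm{B}_\tau$ are defined as $\fm{A}(\tau+\cdot)-\fm{A}(\tau)$ and $\fm{B}(\tau+\cdot)-\fm{B}(\tau)$ rather than as plain shifts, one must consistently use that they have the same increments as $\fm{A}(\tau+\cdot)$ and $\fm{B}(\tau+\cdot)$ — equivalently, that $\fm{B}_\tau(t)$ equals $\lambda t-\fm{R}_\tau(t)$ only up to the irrelevant additive constant $\fm{R}(\tau)$ — so that every $d\fm{A}_\tau$ and $d\fm{B}_\tau$ integral matches its unshifted counterpart over $[\tau,\tau+t]$. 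That is the one place an argument could slip; everything else is a routine change of variables $s=\tau+u$ in the integrals of \eqref{eq:fluid-def-R}, \eqref{eq:fluid-def-Z}, \eqref{eq:A(t)_Q(t)} and \eqref{eq:Z(t)(Cx)}.
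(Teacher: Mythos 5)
Your proposal is correct. For \eqref{eq:shift_A(t)} and \eqref{eq:shift_Z(t)Measure} you do essentially what the paper does: subtract \eqref{eq:A(t)_Q(t)} at times $\tau+t$ and $\tau$, and subtract \eqref{eq:Z(t)(Cx)} evaluated at $(\tau+t,x)$ and $(\tau,x+t)$ with the change of variables $s=\tau+u$, using that $d\fm{A}_\tau$ has the same increments as $d\fm{A}(\tau+\cdot)$. Where you genuinely diverge is \eqref{eq:shift_X(t)}: the paper never invokes a restart principle; it expands $\fmm{Z}(\tau)(C_t)$ by plugging \eqref{eq:A(t)_Q(t)} into \eqref{eq:Z(t)(Cx)} and integrating by parts, substitutes this into the right-hand side of \eqref{eq:shift_X(t)}, and checks that the resulting expression collapses to the right-hand side of \eqref{eq:X(t)} evaluated at $\tau+t$. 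You instead verify that $(\fmm{R}_\tau,\fmm{Z}_\tau)$ satisfies \eqref{eq:fluid-def-R}--\eqref{eq:non-idling-z} with the valid initial condition $(\fmm{R}(\tau),\fmm{Z}(\tau))$ (correctly noting that $\fm{B}_\tau$ agrees with $\lambda t-\fm{R}_\tau(t)$ only up to the harmless constant $\fm{R}(\tau)$, and that $\fm{A}_\tau$ is the auxiliary process \eqref{eq:A-B} of the shifted model), and then cite the derivation of \eqref{eq:X(t)} in \cite{Zhang2013} applied to the shifted solution. Both arguments are sound; yours buys a slightly stronger and conceptually cleaner statement --- the shifted pair is itself a bona fide fluid model solution, which is exactly the ``restart'' interpretation the paper only asserts informally --- at the cost of leaning on the external fact that \eqref{eq:X(t)} was derived in \cite{Zhang2013} for an arbitrary valid initial condition, whereas the paper's computation is self-contained given the identities already displayed in Section~\ref{sec:some-prel-analys}.
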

\begin{proof}
Plugging \eqref{eq:A(t)_Q(t)} into \eqref{eq:Z(t)(Cx)} and applying integration-by-parts give
\begin{align*}
  \fmm{Z}(\tau)(C_t)
  &= \fmm{Z}(0)(C_{\tau+t})+\lambda\int_0^\tau G^c(\tau+t-s)H(\fm{Q}(s))ds-\int_0^\tau G^c(\tau+t-s)d\fm{Q}(s)\\
  &= \fmm{Z}(0)(C_{\tau+t})+\frac{\lambda}{\mu}\int_t^{\tau+t}H(\fm{Q}(\tau+t-s))dG_e(s)\\
  &\quad -\fm{Q}(\tau)G^c(t)+\fm{Q}(0)G^c(\tau+t)+\int_{t}^{\tau+t}\fm{Q}(\tau+t-s)dG(s).
\end{align*}
So the right-hand side of \eqref{eq:shift_X(t)} becomes
\begin{align*}
  \fm{Z}(0)(C_{\tau+t})+\fm{Q}(0)G^c(\tau+t)
  +\frac{\lambda}{\mu}\int_0^{\tau+t}H(\fm{Q}(\tau+t-s))dG_e(s)
  +\int_0^{\tau+t}\fm{Q}(\tau+t-s)dG(s),
\end{align*}
which equals $\fm{X}(\tau+t)$ by \eqref{eq:X(t)}. 
Thus \eqref{eq:shift_X(t)} follows by applying the time shift definition.

The equation \eqref{eq:shift_A(t)} follows easily by the time shift definition and \eqref{eq:A(t)_Q(t)},
\begin{align*}
  \fm{A}_\tau(t)
  &= \lambda\int_\tau^{\tau+t}H(\fm{Q}(s))ds-\fm{Q}(\tau+t)+\fm{Q}(\tau)\\
  &= \lambda\int_0^tH(\fm{Q}_\tau(s))ds-\fm{Q}_\tau(t)+\fm{Q}_\tau(0).
\end{align*}
It follows from \eqref{eq:Z(t)(Cx)} and the time shift definition that the right-hand side of \eqref{eq:shift_Z(t)Measure} becomes
\begin{align*}
  &\quad\fmm{Z}(0)(C_{x+\tau+t})+\int_0^{\tau}G^c(x+\tau+t-s)d\fm{A}(s)+\int_0^tG^c(x+t-s)d\fm{A}(\tau+s)\\
  &= \fmm{Z}(0)(C_{x+\tau+t})+\int_0^{\tau}G^c(x+\tau+t-s)d\fm{A}(s)+\int_\tau^{\tau+t}G^c(x+\tau+t-s)d\fm{A}(s)\\
  &= \fmm{Z}(0)(C_{x+\tau+t})+\int_0^{\tau+t}G^c(x+\tau+t-s)d\fm{A}(s).
\end{align*}
By  \eqref{eq:Z(t)(Cx)}, the above equality is equal to $\fmm{Z}(\tau+t)(C_x)=\fmm{Z}_\tau(t)(C_x)$. So \eqref{eq:shift_Z(t)Measure} is proved.
\end{proof}

\paragraph{An asymptotic bound on the growth of the fluid queue length.}
Recall that $N_F$, the mean of patience time distribution $F(\cdot)$, can be finite or infinite. If $N_F<\infty$, then it follows from \eqref{eq:Q(t)} that
\[\fm{Q}(t)\leq \lambda \int_0^{+\infty}F^c(s)ds=\lambda N_F<\infty.\]
So the fluid queue length is uniformly bounded by the constant $\lambda N_F$ in this case. 
However, $N_F$ is also allowed to be infinite in our study. The fluid queue length does not have an upper bound any more. We show the following bound on the growth of the fluid queue length, which will help to prove our main result of convergence. 

\begin{lemma}\label{le:Q(t)=o(t)}
For any valid initial condition $(\fmm{R}(0), \fmm{Z}(0))$, the fluid model solution satisfies
\begin{equation*}
  \lim_{t\to\infty}\frac{\fm{Q}(t)}{t}=0.
\end{equation*}
\end{lemma}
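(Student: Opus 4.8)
The plan is to reduce the claim to the sublinear growth of $F_d$, combined with the elementary a priori bound $\fm{R}(t)\le \fm{R}(0)+\lambda t$ on the fluid content in the virtual buffer.

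First I would dispose of the case $N_F<\infty$, which is already handled in the discussion preceding the lemma: \eqref{eq:Q(t)} gives $\fm{Q}(t)\le\lambda N_F<\infty$ for all $t$, so $\fm{Q}(t)/t\to0$. It therefore remains to treat $N_F=\infty$, in which case $F_d$ is continuous, strictly increasing and unbounded on $\R_+$. The key observation is that, by \eqref{eq:B(t)}, $\fm{R}(t)=\fm{R}(0)+\lambda t-\brackets{\fm{B}(t)-\fm{B}(0)}$, and $\fm{B}$ is non-decreasing (the increment $\fm{B}(t)-\fm{B}(s)$ is the amount of fluid that has left the virtual buffer over $(s,t]$, as explained below \eqref{eq:B(t)}; this monotonicity is part of the fluid model structure established in \cite{Zhang2013}). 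Hence the bracketed term is non-negative and $\fm{R}(t)\le\fm{R}(0)+\lambda t$ for all $t\ge0$.

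Next, I would rewrite \eqref{eq:Q(t)} using the definition \eqref{eq:F_d} as $\fm{Q}(t)=\lambda F_d\brackets{\fm{R}(t)/\lambda}$, so that the bound above and the monotonicity of $F_d$ yield
\[
  \fm{Q}(t)=\lambda F_d\!\left(\frac{\fm{R}(t)}{\lambda}\right)\le \lambda F_d\!\left(t+\frac{\fm{R}(0)}{\lambda}\right).
\]
Finally I would use that $F_d$ is sublinear, i.e. $F_d(u)/u\to0$ as $u\to\infty$: since $F^c$ is non-increasing, bounded by $1$, and $F^c(y)\to0$ as $y\to\infty$, splitting $\frac1u\int_0^u F^c(y)\,dy$ at a level beyond which $F^c<\varepsilon/2$ gives $F_d(u)/u<\varepsilon$ for $u$ large. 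Dividing the displayed inequality by $t$ and letting $t\to\infty$ then gives $\limsup_{t\to\infty}\fm{Q}(t)/t\le\lim_{u\to\infty}\lambda F_d(u)/u=0$, which with $\fm{Q}(t)\ge0$ proves the claim.

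The whole argument is short and elementary; the only step requiring any attention is the justification of the linear bound $\fm{R}(t)\le\fm{R}(0)+\lambda t$, equivalently the monotonicity of $\fm{B}$, which however follows directly from the construction of the fluid model (fluid enters the virtual buffer only through arrivals at rate $\lambda$, so the buffer content cannot grow faster than $\lambda t$). I do not anticipate any genuine obstacle.
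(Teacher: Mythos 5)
Your proposal is correct and follows essentially the same route as the paper: both rest on the a priori linear bound $\fm{R}(t)\le\fm{R}(0)+\lambda t$ (equivalently, $\fm{B}$ non-decreasing) combined with the sublinearity of $\int_0^t F^c(s)\,ds$ when $F^c(t)\to 0$, your version merely phrasing the bound through $F_d$ instead of splitting the integral in \eqref{eq:Q(t)}. The only point where the paper is slightly more careful is the monotonicity of $\fm{B}$: rather than appealing to the model construction, it deduces it from the monotonicity of $\fm{A}$ (Lemma~A.3 of \cite{Zhang2013}) together with the fact that $F(t)<1$ for all $t$ when $N_F=\infty$, so the integrand in \eqref{eq:A-B} is strictly positive.
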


\begin{proof}
The result trivially holds when $N_F<\infty$. We focus on the case when $N_F=\infty$ in this proof. 
It is easy to see that $F(t)< 1$ for any $t>0$ when $N_F=\infty$. According to \eqref{eq:A-B} and the monotonicity of $\fm{A}(t)$, the process $\fm{B}(t)$ is also non-decreasing. Using the definition of $\fm{B}(t)$ in \eqref{eq:B(t)}, we have $\fm{B}(t)-\fm{B}(0)=\lambda t-\fm{R}(t)+\fm{R}(0)\geq 0$, or equivalently
\begin{equation*}
  t-\frac{\fm{R}(t)}{\lambda}\geq -\frac{\fm{R}(0)}{\lambda}.
\end{equation*}
Applying the above inequality to \eqref{eq:Q(t)} gives
\[\begin{split}
\fm{Q}(t)=& \lambda\int_{t-\frac{\fm{R}(t)}{\lambda}}^0F^c(t-s)ds+\lambda\int_0^tF^c(t-s)ds\\
\leq & \lambda\int_{-\frac{\fm{R}(0)}{\lambda}}^0 F^c(t-s)ds+\lambda\int_0^t F^c(t-s)ds\\
%\leq & \lambda\int_{-\frac{\fm{R}(0)}{\lambda}}^0 F^c(-s)ds+\lambda\int_0^t F^c(t-s)ds\\
=& \fm{Q}(0)+\lambda\int_0^t F^c(s)ds.
\end{split}\]
Thus
\begin{equation*}
  \lim_{t\to\infty}\frac{\fm{Q}(t)}{t}
  \leq\lim_{t\to\infty}\frac{\fm{Q}(0)+\lambda\int_0^t F^c(s)ds}{t}
  =\lim_{t\to\infty} \lambda F^c(t)=0.  
\end{equation*}
\end{proof}

\paragraph{A comparison result.}
Consider two $G/GI/N+GI$ fluid models with the same service time distribution $G$, patience time distribution $F$ and the valid initial condition $\left(\fmm{R}(0), \fmm{Z}(0)\right)$. The only difference is the arrival rate. Let $\left(\fmm{R}_i(\cdot),\fmm{Z}_i(\cdot)\right)$ denote the fluid model solution corresponding to the arrival rate $\lambda_i$, $i=1,2$. Denote by $\fm{Q}_i$, $\fm{R}_i$, $\fm{Z}_i$ and $\fm{A}_i$ the corresponding derived process associated with the $i$th fluid model. We have the following comparison result. 

\begin{lemma}\label{lem:comparasion} 
  Suppose $\lambda_1\leq\lambda_2$, then $\fm{A}_1(t)\leq\fm{A}_2(t),\fm{S}_1(t)\leq\fm{S}_2(t)$ for any $t\geq 0$.
\end{lemma}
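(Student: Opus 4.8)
The plan is to work with the renewal representation \eqref{eq:A(t)_Z(t)}--\eqref{eq:A(t)Convolution} together with the integral identity \eqref{eq:A(t)_Q(t)}, and to exploit the monotonicity of the building blocks in the arrival rate. First I would write, for each $i=1,2$, the defining relation in the form
\begin{equation*}
  \fm{A}_i(t)=\lambda_i\int_0^t H(\fm{Q}_i(s))\,ds-\fm{Q}_i(t)+\fm{Q}_i(0),
\end{equation*}
which comes from \eqref{eq:A(t)_Q(t)}. Since the two fluid models share the same valid initial condition, $\fm{Q}_1(0)=\fm{Q}_2(0)=\fm{Q}(0)$. Recall also that $\fm{Q}_i(t)=\lambda_i\int_0^{\fm{R}_i(t)/\lambda_i}F^c(s)\,ds$ by \eqref{eq:Q(t)}, so that $H(\fm{Q}_i(t))=F^c(\fm{R}_i(t)/\lambda_i)$ on the relevant range, by \eqref{eq:H(x)} and the definition of $F_d$. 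Hence $\lambda_i H(\fm{Q}_i(t))=\lambda_i F^c(\fm{R}_i(t)/\lambda_i)$, which is exactly the rate $d\fm{A}_i/dt$ would have if $\fm{Q}_i$ were absolutely continuous; more robustly, combining with \eqref{eq:A-B} and \eqref{eq:B(t)}, $d\fm{A}_i(t)=F^c(\fm{R}_i(t)/\lambda_i)\,d\fm{B}_i(t)$ with $d\fm{B}_i(t)=\lambda_i\,dt-d\fm{R}_i(t)$.

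The core of the argument is a comparison/coupling run forward in time. I would show that $\fm{R}_1(t)\le\fm{R}_2(t)$ for all $t$, and simultaneously $\fm{A}_1(t)\le\fm{A}_2(t)$, by a continuity (or Gr\"onwall-type) argument: both functions start equal at $t=0$, and at any first time $t_0$ where $\fm{R}_1(t_0)=\fm{R}_2(t_0)$ and the ordering is about to be violated, one compares the instantaneous dynamics. Because $F^c$ is nonincreasing and $\fm{R}_1/\lambda_1\ge \fm{R}_1/\lambda_2$ when $\lambda_1\le\lambda_2$, at such a crossing point the smaller-rate model drains its buffer no slower, which prevents the crossing. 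Equivalently, one can avoid the delicate pointwise argument entirely by using the key convolution equation \eqref{eq:X(t)}: $\fm{X}_i$ solves a fixed-point equation whose right-hand side is monotone nondecreasing in $\lambda_i$ (the term $\frac{\lambda_i}{\mu}\int_0^t H((\fm{X}_i(t-s)-1)^+)\,dG_e(s)$ is increasing in $\lambda_i$ for fixed path, since $H\ge 0$, and also monotone in the unknown through $H$ being... here one must be careful, as $H$ is nonincreasing), so a monotone-iteration argument on \eqref{eq:X(t)} starting from a common initial guess yields $\fm{X}_1(t)\le\fm{X}_2(t)$, hence $\fm{Q}_1(t)\le\fm{Q}_2(t)$. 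From $\fm{Q}_1\le\fm{Q}_2$ and $\lambda_1\le\lambda_2$ together with $H\ge 0$ and $H$ nonincreasing... the sign of $\lambda_i H(\fm{Q}_i)$ is then controlled, and \eqref{eq:A(t)_Q(t)} gives $\fm{A}_1(t)\le\fm{A}_2(t)$ after accounting for the $-\fm{Q}_i(t)$ terms via \eqref{eq:A(t)_Z(t)}.

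Finally, $\fm{S}_1(t)\le\fm{S}_2(t)$ follows immediately: by \eqref{eq:service-completion}, $\fm{S}_i(t)=\fmm{Z}(0)((0,t])+\int_0^t G(t-s)\,d\fm{A}_i(s)$, and since $\fmm{Z}(0)$ is common and $\fm{A}_1\le\fm{A}_2$ with both nondecreasing (Lemma~A.3 of \cite{Zhang2013}), integration by parts gives $\int_0^t G(t-s)\,d\fm{A}_i(s)=\int_0^t \fm{A}_i(s)\,g(t-s)\,ds$ in the absolutely continuous case, or more generally $\int_0^t \fm{A}_i(t-u)\,dG(u)$, which is monotone in $\fm{A}_i$; hence $\fm{S}_1(t)\le\fm{S}_2(t)$.

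The main obstacle I anticipate is the non-monotonicity of $H$ in its argument: larger $\lambda$ pushes more fluid toward the server, which tends to increase $\fm{Q}$, but $H(\fm{Q})$ is \emph{decreasing} in $\fm{Q}$, so the naive coupling has competing effects and one cannot simply compare rates termwise. The resolution is to compare $\fm{R}_i$ (equivalently $\fm{Q}_i$) and $\fm{A}_i$ jointly and to use that the relevant quantity entering the buffer dynamics, $F^c(\fm{R}_i/\lambda_i)$, is monotone in the \emph{right} direction once we fix the ordering $\fm{R}_1\le\fm{R}_2$; closing this loop rigorously — i.e.\ showing the ordering is self-consistent and cannot first fail — is the delicate step, and I would carry it out via a minimal-counterexample / first-crossing-time argument exploiting continuity of all the processes involved (which holds under Assumption~\ref{assump:GF}).
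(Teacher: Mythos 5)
There is a genuine gap: neither of your two routes actually closes, and you flag the critical step yourself without resolving it. The monotone-iteration route on \eqref{eq:X(t)} fails outright because the fixed-point map is \emph{not} monotone in the unknown: the term $\frac{\lambda}{\mu}\int_0^t H((\fm{X}(t-s)-1)^+)\,dG_e(s)$ is nonincreasing in the path $\fm{X}$ (since $H$ is nonincreasing) while $\int_0^t(\fm{X}(t-s)-1)^+dG(s)$ is nondecreasing, so iteration from a common starting point does not propagate an ordering. The first-crossing route on $\fm{R}$ is also not justified: the drift of $\fm{R}_i$ is $\lambda_i\,dt-d\fm{B}_i$, and $d\fm{B}_i$ is dictated by the non-idling constraint through the server-pool measure $\fmm{Z}_i$ (i.e.\ the whole service history), not by the current value of $\fm{R}_i$; so at a crossing $\fm{R}_1(t_0)=\fm{R}_2(t_0)$ you cannot compare ``instantaneous dynamics'' of the buffers alone, and the claim that the smaller-rate system drains no slower is unsubstantiated. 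Worse, even if you had $\fm{Q}_1\le\fm{Q}_2$, it would not give the conclusion termwise from \eqref{eq:A(t)_Q(t)}: it yields $H(\fm{Q}_1)\ge H(\fm{Q}_2)$, so $\lambda_1H(\fm{Q}_1)$ versus $\lambda_2H(\fm{Q}_2)$ has competing effects, and the $-\fm{Q}_i(t)$ terms also point the wrong way. In fact the useful local ordering is the \emph{reverse} one: $\fm{Q}_1\ge\fm{Q}_2$ together with $\lambda_1\le\lambda_2$ and \eqref{eq:Q(t)} gives $\fm{R}_1/\lambda_1\ge\fm{R}_2/\lambda_2$, hence $\lambda_1F^c(\fm{R}_1/\lambda_1)\le\lambda_2F^c(\fm{R}_2/\lambda_2)$, which is the comparison that actually helps.

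For contrast, the paper does not attempt any global ordering of $\fm{R}$, $\fm{Q}$ or $\fm{X}$. It argues by contradiction on $\fm{A}$ directly: fix $\delta>0$ and let $\tau$ be the first time $\fm{A}_1-\fm{A}_2\ge\delta$. The renewal relation \eqref{eq:A(t)_Z(t)} shows that $\fm{Z}_1(t)\le\fm{Z}_2(t)$ forces $\fm{A}_1(t)-\fm{A}_2(t)<\delta$ on $[0,\tau]$, so $\fm{Z}_1(\tau)>\fm{Z}_2(\tau)$ and hence $\fm{Q}_2(\tau)=0$ by non-idling. Taking $r$ to be the last time before $\tau$ with $\fm{Q}_1<\fm{Q}_2$, one has $\fm{Q}_1\ge\fm{Q}_2$ on $[r,\tau]$, whence $\fm{R}_1/\lambda_1\ge\fm{R}_2/\lambda_2$ there; writing $d\fm{A}_i=F^c(\fm{R}_i/\lambda_i)(\lambda_i\,ds-d\fm{R}_i)$ and integrating over $[r,\tau]$ (the $d\fm{R}_i$ parts integrate exactly to queue-length differences via \eqref{eq:Q(t)}) gives $\fm{A}_1(\tau)-\fm{A}_2(\tau)\le\fm{A}_1(r)-\fm{A}_2(r)-\fm{Q}_1(\tau)$, and the non-idling constraint at $r$ (either $r=0$, or $\fm{Z}_2(r)=1\ge\fm{Z}_1(r)$) bounds $\fm{A}_1(r)-\fm{A}_2(r)<\delta$, contradicting the definition of $\tau$. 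Your final step, deducing $\fm{S}_1\le\fm{S}_2$ from $\fm{A}_1\le\fm{A}_2$ via the convolution form of \eqref{eq:service-completion}, is correct and matches the paper; the missing content is precisely the ``delicate step'' you defer, which is the substance of the lemma.
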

\begin{proof}
We first prove the comparison of $\fm{A}_i$'s. For any $\delta>0$, let 
\begin{equation*}
  \tau=\inf\{t\in \R_+:\fm{A}_1(t)-\fm{A}_2(t)\geq\delta\}
\end{equation*}
be the first time when $\fm{A}_1$ exceeds $\fm{A}_2$ by $\delta$. Since the two fluid models start from the same initial condition, we must have $\tau>0$. Now the objective is to show that $\tau=\infty$. Suppose $\tau$ is finite. 
For any $t\in[0,\tau]$, if $\fm{Z}_1(t)\leq\fm{Z}_2(t)$, then by \eqref{eq:A(t)_Z(t)} 
\begin{equation}
  \label{eq:tech-logic-Z-A}
  \begin{split}
    \fm{A}_1(t)-\fm{A}_2(t)
    &=\fm{Z}_1(t)-\fm{Z}_2(t)-\int_0^t(\fm{A}_1(s)-\fm{A}_2(s))dG(t-s)\\
    &\leq -\int_0^t(\fm{A}_1(s)-\fm{A}_2(s))dG(t-s)\\
    &<\delta G(t)
    \leq \delta,
  \end{split}
\end{equation}
for any $t\in[0,\tau]$. This implies that $\fm{Z}_1(\tau)>\fm{Z}_2(\tau)$. 
A direct consequence is that 
\begin{equation}
  \label{eq:tech-Q2=0}
  \fm{Q}_2(\tau)=0,
\end{equation}
due to the non-idling equations \eqref{eq:non-idling-q} and \eqref{eq:non-idling-z}.
Let 
\[r=\sup\{t<\tau:\fm{Q}_1(t)<\fm{Q}_2(t)\}\vee 0\]
be the last time $\fm{Q}_1$ is less than $\fm{Q}_2$. 
Thus $\fm{Q}_1(t)\geq\fm{Q}_2(t)$ for each $t\in[r,\tau]$. 
Then it follows from \eqref{eq:Q(t)} that 
\begin{equation}
  \label{eq:tech-R/lambda-comp}
  \frac{\fm{R}_1(t)}{\lambda_1}\geq\frac{\fm{R}_2(t)}{\lambda_2},\quad \textrm{ for any } t\in[r,\tau].
\end{equation}
According to \eqref{eq:B(t)} and \eqref{eq:A-B},
\begin{align}
  \fm{A}_1(\tau)-\fm{A}_2(\tau)
  &=\fm{A}_1(r)-\fm{A}_2(r)-\left[
    \int_r^{\tau}F^c \left( \frac{\fm{R}_1(s)}{\lambda_1}\right)d\fm{R}_1(s)
    -\int_r^\tau F^c\left(\frac{\fm{R}_2(s)}{\lambda_2}\right)d\fm{R}_2(s)
    \right]\nonumber\\
  &\quad+\lambda_1\int_r^\tau F^c \left( \frac{\fm{R}_1(s)}{\lambda_1}\right)ds
         -\lambda_2\int_r^\tau F^c \left( \frac{\fm{R}_2(s)}{\lambda_2}\right)ds\nonumber\\
  \label{eq:ACom}
  &\leq \fm{A}_1(r)-\fm{A}_2(r)-\fm{Q}_1(\tau)+\fm{Q}_1(r)+\fm{Q}_2(\tau)-\fm{Q}_2(r),
\end{align}
where the inequality is due to \eqref{eq:tech-R/lambda-comp} and the following derivation based on \eqref{eq:Q(t)}
\begin{equation*}
  \int_r^\tau F^c\left(\frac{\fm{R}_1(s)}{\lambda_1}\right)d\fm{R}_1(s)
  =\lambda_1\int_{\frac{\fm{R}_1(r)}{\lambda_1}}^{\frac{\fm{R}_1(\tau)}{\lambda_1}}F^c(s)ds=\fm{Q}_1(\tau)-\fm{Q}_1(r).
\end{equation*}
From the definition of $r$ and the continuity of $\fm{Q}_i$, we have $\fm{Q}_1(r)=\fm{Q}_2(r)$. Following this and \eqref{eq:tech-Q2=0}, the above inequality \eqref{eq:ACom} can be continued as 
\begin{align*}
  \fm{A}_1(\tau)-\fm{A}_2(\tau)
  \le \fm{A}_1(r)-\fm{A}_2(r)-\fm{Q}_1(\tau).
\end{align*}
If $r=0$, then $\fm{A}_1(\tau)-\fm{A}_2(\tau)\le 0$ due to the same initial condition and non-negativity of $\fm{Q}_i$. 
If $r>0$, then $\fm{Z}_2(r)=1$ by the non-idling constraints \eqref{eq:non-idling-q}--\eqref{eq:non-idling-z} and continuity of $\fm{Z}_i$. So $\fm{Z}_1(r)\le \fm{Z}_2(r)$, which implies that $\fm{A}_1(r)-\fm{A}_2(r)<\delta$ by \eqref{eq:tech-logic-Z-A}. Therefore, we have $\fm{A}_1(\tau)-\fm{A}_2(\tau)<\delta$ in this case. 
Both cases contradict the definition of $\tau$. So $\tau$ cannot be finite. Thus, we have proved that $\fm{A}_1(t)\leq\fm{A}_2(t)$ for all $t\geq 0$.

The equation in  \eqref{eq:service-completion} implies
\begin{equation*}
  \fm{S}(t)=\fmm{Z}(0)((0,t))+\int_0^t\fm{A}(t-s)dG(s).
\end{equation*}
Thus, the comparison of the service completion processes $\fm{S}_i$'s follows immediately from the above equation and the comparison of $\fm{A}_i$'s.
\end{proof}

\bibliography{pub}

\end{document}